\theoremstyle{plain}
\newtheorem{thm}{Theorem}
\newtheorem{lem}[thm]{Lemma}
\newtheorem{cor}[thm]{Corollary}
\theoremstyle{definition}
\newtheorem{rmk}[thm]{Remark}
\numberwithin{equation}{thm}
\newcommand{\Tr}{{\rm Tr}}
\newcommand{\Stab}{{\rm Stab}}
\newcommand{\Ker}{{\rm Ker}}
\newcommand{\alt}{\mathsf{A}}
\newcommand{\Irr}{\mathrm{Irr}}
\newcommand{\eps}{\epsilon}
\newcommand{\Ind}{\text{Ind}}
\newcommand{\diag}{\text{diag}}
\newcommand{\SL}{\mathrm{SL}}
\newcommand{\PSL}{\mathrm{PSL}}
\newcommand{\GL}{\mathrm{GL}}
\newcommand{\PSU}{\mathrm{PSU}}
\newcommand{\SO}{\mathrm{SO}}
\newcommand{\GO}{\mathrm{O}}
\newcommand{\sO}{{\mathcal O}}
\newcommand{\C}{{\mathbb C}}
{

\newcommand{\F}{{\mathbb F}}

\newcommand{\Z}{{\mathbb Z}}

\newcommand{\0}{\emptyset}

\newcommand{\ZB}{{\mathbf Z}}
\newcommand{\CB}{{\mathbf C}}

\newcommand{\bz}{\boldsymbol{z}}

\begin{document}
\title{The conjugation representation of the binary modular congruence group}
\author{Pham Huu Tiep}
\address{Department of Mathematics, Rutgers University, Piscataway, NJ 08854}
\email{tiep@math.rutgers.edu}
\thanks{The author gratefully acknowledges the support of the NSF (grant
DMS-2200850), the Simons Foundation, and the Joshua Barlaz Chair in Mathematics.}
\thanks{Part of this work was done while the author was visiting Princeton University and MIT. 
It is a pleasure to thank Princeton University and MIT for generous hospitality and stimulating environment.} 
\thanks{The author thanks Richard Hain for bringing the problem to his attention, and Gabriel Navarro and Eamonn O'Brien
for helpful conversations on the problem and for several computer calculations in the cases $p=2,3$.}  

\begin{abstract}
Motivated by the study of an Hecke action on iterated Shimura integrals undertaken in 
\cite{Hain}, in this appendix to \cite{Hain} we prove that, for any
prime $p \geq 5$ and for any integer $n \geq 1$, every complex irreducible representation of $G=\SL_2(\Z/p^n \Z)$ that are
 trivial on $\ZB(G)$ appears as an irreducible constituent of  
the conjugation representation of $G$. 
\end{abstract}

\maketitle

Let $G$ be a finite group. Consider the action of $G$ on (the vector space of) the complex group algebra 
$\C G = \langle x \mid x \in G \rangle_{\C}$, via its
conjugation on the natural basis $\{x \mid x \in G\}$: 
$$g: x \mapsto gxg^{-1}.$$
The corresponding representation $\Pi=\Pi_G$ is called {\it the conjugation representation}, or {\it the adjoint representation}.
This representation and its character $\pi_G$ are standard objects
of study in the representation theory of finite groups. For instance, the
submodule of $G$-invariants of this module forms the center of the
group algebra $\C G$. The multiplicity of the trivial representation $1_G$ in 
$\Pi_G$ equals the number $k(G)$ of conjugacy classes in $G$. However, almost nothing is known about the
multiplicities of other irreducible summands in $\Pi_G$. The problem has a
long history; and most of study was to determine these irreducible 
constituents of $\Pi_G$. See \cite{R}, \cite{Sch}, \cite{P} and
the bibliography there.

It is clear that any irreducible constituent of $\Pi_G$ is trivial on the center $\ZB(G)$. This condition turns out to be {\it not} sufficient.
More generally, the problem of determining irreducible constituents of $\Pi_G$ for finite simple groups has been completely 
solved, for alternating and sporadic groups in \cite{HZ} and for groups of Lie type in \cite{HSTZ}:

\begin{thm}{\rm \cite[Theorem 1.1]{HSTZ}}\label{main-hstz}
Let $G$ be a finite non-abelian simple group, other than 
$\PSU_n(q)$ with $n \geq 3$ coprime to $2(q+1)$. Then every irreducible 
character of $G$ is a constituent of the conjugation character $\pi_G$.
In the exceptional case, every irreducible character occurs, except
precisely the (unique)   
irreducible character of dimension $(q^n-q)/(q+1)$.
\end{thm}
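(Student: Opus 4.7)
The starting point is the classical identity
\[
\langle \chi, \pi_G \rangle \;=\; \sum_{[g]} \chi(g),
\]
where the sum runs over a set of representatives of the conjugacy classes of $G$; this follows immediately from $\pi_G(g) = |C_G(g)|$ together with the second orthogonality relation. Theorem~\ref{main-hstz} is therefore a positivity statement for the ``class-sum'' $s(\chi) := \sum_{[g]} \chi(g)$, together with a sharp vanishing statement in the exceptional $\PSU_n(q)$ case.

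Using the Classification of Finite Simple Groups, I would split the argument into the alternating, sporadic, and Lie-type families. The first two admit a rather direct treatment: for sporadic groups one inspects character tables, and for $\mathsf{A}_n$ one unfolds $s(\chi)$ into a symmetric-function sum over partitions and controls the dominant contribution from classes with large centralizer. These cases were in fact settled in \cite{HZ}, so the weight of the argument lies with the groups of Lie type.

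For a finite reductive group $G$ with Langlands dual $G^*$, every $\chi \in \Irr(G)$ sits in a Lusztig series $\mathcal{E}(G,(s))$ for some semisimple class $(s)$ of $G^*$. My plan is: (i) expand $\chi(g)$ via Lusztig's character formula in terms of Deligne--Lusztig virtual characters $R_T^G(\theta)$ and Green functions on $C_{G^*}(s)$; (ii) evaluate $s(\chi)$ by summing first within each rational class of maximal tori $T$ and then over such tori, using that $R_T^G(\theta)$ vanishes on regular semisimple elements outside the geometric conjugates of $(T,\theta)$; (iii) extract a positive dominant term, typically from a regular semisimple class in a Coxeter torus. A parallel, cleaner route uses the Gelfand--Graev character $\Gamma_G$, in which every regular irreducible occurs with multiplicity one: comparing $\pi_G$ against a combination of Gelfand--Graev characters and their twists should detect all but a controlled list of small-dimensional, cuspidal, or unipotent characters, which can then be handled case by case.

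\textbf{Main obstacle.} The hardest part is the exceptional family $\PSU_n(q)$ with $n \geq 3$ coprime to $2(q+1)$, where one must prove two complementary statements: (a) the specific irreducible of degree $(q^n-q)/(q+1)$ satisfies $s(\chi)=0$, and (b) every other irreducible gives $s(\chi) > 0$. Statement (a) is an exact cancellation identity: the class-sum is an alternating combination of Green-function values across unipotent and mixed classes in $\PSU_n(q)$, and the arithmetic condition $\gcd(n,2(q+1))=1$ is precisely what forces the cancellation to occur. Statement (b) requires a uniform positivity bound within each Lusztig series, together with an argument that ensures the delicate cancellation responsible for (a) is genuinely isolated. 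Establishing this sharp dichotomy---rather than merely a generic positivity result---is where I expect the technical difficulty of the proof to concentrate.
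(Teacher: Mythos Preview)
The paper does not prove this theorem. Theorem~\ref{main-hstz} is quoted verbatim from \cite[Theorem~1.1]{HSTZ} and is used in the present paper solely as a black box input to Corollary~\ref{psl2} (and hence to the induction base of Theorem~\ref{main-sl2}). There is therefore no proof in this paper against which to compare your proposal.

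That said, your sketch is broadly aligned with how \cite{HSTZ} actually proceeds. The identity $\langle \chi,\pi_G\rangle = \sum_{[g]}\chi(g)$ (the column-sum formula) is indeed the starting point there, the alternating and sporadic cases are delegated to \cite{HZ}, and the Lie-type analysis does hinge on Deligne--Lusztig theory, Lusztig series, and a careful treatment of the unitary groups. Your description of the $\PSU_n(q)$ dichotomy as the crux is accurate. But what you have written is a plan, not a proof: the positivity of the class-sum across all Lusztig series requires substantial case analysis and estimates that you have not supplied, and the exact vanishing for the exceptional unitary character is a specific computation, not a consequence of the general framework you outline. If you intend to reproduce the result of \cite{HSTZ} rather than cite it, you would need to fill in those arguments in full; within the scope of the present paper, however, a citation is all that is required.
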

 
As a special case of Theorem \ref{main-hstz}, we see that $\pi_G$ contains $\Irr(G)$ (the set of complex irreducible characters of 
$G$) when $G = \PSL_2(p)$ for any prime
$p \geq 5$. Equivalently, the irreducible constituents of $\pi_G$, where $G = \SL_2(p)$, are precisely the irreducible characters of
$G$ that are trivial on $\ZB(G)$. 

As shown in \cite{Hain}, it is of importance for the study of certain Hecke action on iterated Shimura integrals, see \cite[\S23]{Hain},
to extend the above result to all binary modular groups $\SL_2(\Z/p^n\Z)$, for all primes $p$ and all integers $n \geq 1$.

The main result of this paper settles this question for $p \geq 5$.

\begin{thm}\label{main-sl2}
Let $p \geq 5$ be any prime, $n$ any natural number, and let $G = \SL_2(\Z/p^n\Z)$. Then every irreducible character of
$G$ that is trivial on the central involution $\bz=-I_2$ of $G$ is a constituent of the conjugation character $\pi_G$.
\end{thm}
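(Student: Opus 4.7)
\emph{Plan.} I proceed by induction on $n$. The base case $n=1$ is Theorem~\ref{main-hstz} applied to $\PSL_2(p)$: since $p \geq 5$, the group $\SL_2(p)$ avoids the exceptional $\PSU_n$ family, so every irreducible character of $\SL_2(p)$ trivial on $\bz$ is a constituent of $\pi_{\SL_2(p)}$. For the inductive step $n \geq 2$, set $\bar G := \SL_2(\Z/p^{n-1}\Z)$ and $K := \Ker(G \twoheadrightarrow \bar G)$. Then $K$ is an elementary abelian $p$-group isomorphic, as a $\bar G$-module, to $\mathfrak{sl}_2(\F_p)$ (with $\bar G$ acting via reduction modulo $p$ and the adjoint representation of $\SL_2(\F_p)$). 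The reduction map $G \twoheadrightarrow \bar G$ is $G$-equivariant for the conjugation actions, so the induced surjection $\C G \twoheadrightarrow \C\bar G$ realizes the inflation of $\pi_{\bar G}$ as a $G$-quotient of $\pi_G$. Consequently, by the inductive hypothesis, every $\chi \in \Irr(G)$ trivial on $\bz$ and factoring through $\bar G$ is already a constituent of $\pi_G$.

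\emph{New characters via Clifford theory.} It remains to handle the ``new'' characters $\chi \in \Irr(G)$ that are trivial on $\bz$ but nontrivial on $K$. For such a $\chi$, pick $\lambda \in \hat K \setminus\{1\}$ below $\chi$, set $I := I_G(\lambda)$, and write $\chi = \Ind_I^G \psi$ with $\psi \in \Irr(I \mid \lambda)$. A standard computation combining Frobenius reciprocity with $\pi_G(y) = |C_G(y)|$ gives
\[
m_\chi \;=\; \langle \chi, \pi_G\rangle \;=\; \frac{1}{|I|}\sum_{y \in I}\psi(y)\,|C_G(y)|.
\]
Identifying $\hat K$ with $\mathfrak{sl}_2(\F_p)$ via a $\bar G$-equivariant pairing, the nontrivial $\bar G$-orbits on $\hat K$ fall into three types, matching the adjoint $\SL_2(\F_p)$-orbits on nonzero elements of $\mathfrak{sl}_2(\F_p)$: (a) split regular semisimple; (b) non-split regular semisimple; (c) nonzero nilpotent. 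The inertia group $I$ is the full preimage in $G$ of $\Stab_{\bar G}(\lambda)$, which is (respectively) a split torus, a non-split torus, or the normalizer $\{\pm I\}\cdot U$ of a one-parameter unipotent subgroup $U$.

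\emph{Case analysis and main obstacle.} For each orbit type, $\Irr(I\mid\lambda)$ admits an explicit Clifford description: in types (a) and (b) the quotient $I/K$ is abelian (a torus of $\bar G$), so $\psi$ is essentially the product of an extension of $\lambda$ with a character of $I/K$; in type (c) the extension $I/\Ker\lambda$ is Heisenberg-like, and $\psi$ is produced from Stone--von Neumann and carries a quadratic Gauss-sum factor. The plan is then to evaluate the sum above by stratifying $I$ according to its congruence filtration. The semisimple cases (a) and (b) should reduce, via the abelianness of $I/K$ and direct character-value computations, to sums that are visibly nonzero once $\chi(\bz)=\chi(1)$. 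The main obstacle is case (c): the contributions to $m_\chi$ from the unipotent stratum of $I$ are of Gauss-sum type, and the nonvanishing of their combination will require the hypothesis $p \geq 5$ in an essential way (consistent with the exclusion of $p \in \{2,3\}$, precisely where these sums can degenerate and which the authors treat computationally).
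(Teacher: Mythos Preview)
Your induction setup, the reduction to ``new'' characters via Clifford theory, and the multiplicity formula
\[
m_\chi=\frac{1}{|I|}\sum_{y\in I}\psi(y)\,|C_G(y)|
\]
are all correct. The genuine gap is your structural claim about the inertia group. Since $K=F$ is central in the full level-$p$ congruence kernel $E=\Ker(G\twoheadrightarrow G_1)$, one always has $I\supseteq E$, and $I/E$ is the stabilizer of $\lambda$ in $G_1=\SL_2(\F_p)$ (a torus or $\pm U$, as you say). But $I/K$ therefore contains $E/K$, the level-$p$ congruence subgroup of $\bar G=G_{n-1}$, and for $n\geq 3$ the torus lift acts nontrivially on $E/K\cong\mathfrak{sl}_2(\F_p)$ (indeed already $\diag(t,t^{-1})$ does not commute with $I_2+p\begin{smallmatrix}0&1\\0&0\end{smallmatrix}$ in $G_2$), so $I/K$ is \emph{not} abelian once $n\geq 3$. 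Your description of $\Irr(I\mid\lambda)$ as ``an extension of $\lambda$ times a linear character of $I/K$'' is thus valid only for $n=2$; for larger $n$ the characters $\psi$ are genuinely higher-dimensional and their values on $I$ are not available without further work. On top of this, the nilpotent (type~(c)) case is only named as an obstacle, not treated.

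The paper sidesteps both difficulties by \emph{not} computing $m_\chi$. Instead it produces a single witness: a regular diagonal element $g$ of order $p^{n-1}(p-1)$ whose centralizer $C=\CB_G(g)=\langle g\rangle$ is cyclic (Lemma~\ref{cent}), and shows $[\chi|_C,1_C]>0$. The key maneuver is to use the isomorphism $G/\langle E,\bz\rangle\cong\Omega_3(p)$ and a Witt-type transitivity argument (Lemma~\ref{f3}) to conjugate \emph{every} nontrivial $\lambda$ so that its parameter $Z$ lies in the off-diagonal plane $V_0$; this handles all three of your orbit types uniformly. One then checks that $J\cap C=\langle A\rangle\times\langle\bz\rangle$ with $A=g^{p-1}$, and a commutator computation (Lemmas~\ref{p-com1} and~\ref{p-com2}) forces any $\varphi\in\Irr(E\mid\lambda)$ to vanish on $\langle A\rangle$ outside its order-$p$ subgroup, on which it is trivial. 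This gives $[\varphi|_{\langle A\rangle},1_{\langle A\rangle}]=\varphi(1)/p^{n-2}>0$ without ever needing the full character values of $\psi$ on $I$, and without splitting into your cases (a)--(c).
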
 

First we recall some well-known facts.

\begin{lem}\label{red}
Let $G$ be a finite group and let $\chi \in \Irr(G)$. Then $\chi$ is a constituent of $\pi_G$ if and only if there is some 
$g \in G$ such that the restriction of $\chi$ to the centralizer $\CB_G(g)$ contains the principal character $1_{\CB_G(g)}$.
\end{lem}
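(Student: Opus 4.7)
The plan is to decompose the conjugation representation as a sum of transitive permutation modules indexed by conjugacy classes and then apply Frobenius reciprocity.

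First I would observe that the conjugation action of $G$ on the basis $\{x : x \in G\}$ of $\C G$ is a permutation action whose orbits are exactly the conjugacy classes of $G$. Choosing representatives $g_1, \ldots, g_{k(G)}$ of these classes, this decomposes $\Pi_G$ as $\bigoplus_{i=1}^{k(G)} \C[g_i^G]$, where $\C[g_i^G]$ is the permutation $\C G$-module on the conjugacy class of $g_i$.

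Next, since the stabilizer of $g_i$ under conjugation is precisely its centralizer $\CB_G(g_i)$, the orbit--stabilizer identification realizes each summand $\C[g_i^G]$ as the induced module $\Ind_{\CB_G(g_i)}^G 1_{\CB_G(g_i)}$. Applying Frobenius reciprocity term by term then yields
$$\langle \chi, \pi_G \rangle_G \;=\; \sum_{i=1}^{k(G)} \bigl\langle \chi,\, \Ind_{\CB_G(g_i)}^G 1_{\CB_G(g_i)} \bigr\rangle_G \;=\; \sum_{i=1}^{k(G)} \bigl\langle \chi|_{\CB_G(g_i)},\, 1_{\CB_G(g_i)} \bigr\rangle_{\CB_G(g_i)}.$$
Since every term on the right is a non-negative integer, the left-hand side is positive if and only if at least one term is, which is exactly the stated criterion. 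There is no real obstacle: the lemma is a purely formal consequence of the orbit decomposition of the permutation module and Frobenius reciprocity.
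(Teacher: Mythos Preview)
Your proof is correct and follows essentially the same route as the paper's: decompose $\pi_G$ into the permutation characters on the individual conjugacy classes, identify each as $\Ind_{\CB_G(g_i)}^G 1_{\CB_G(g_i)}$, and apply Frobenius reciprocity. The only (minor) addition is that you make explicit the non-negativity argument showing the sum is positive iff some summand is, which the paper leaves implicit.
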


\begin{proof}
It is clear that $\pi_G$ is the permutation character of the action of $G$ on itself via conjugation. The orbits of this action 
are precisely the conjugacy classes $g_i^G$, $1 \leq i \leq k(G)$. Hence $\pi_G = \sum^{k(G)}_{i=1}\pi_i$, where 
$\pi_i$ is the permutation character of $G$ acting on $g_i^G$. Now, $\pi_i$ is just the induced character 
$\Ind^G_{\CB_G(g_i)}(1_{\CB_G(g_i)})$. By Frobenius' reciprocity, 
$$[\pi_i,\chi]_G = [1_{\CB_G(g_i)},\chi|_{\CB_G(g_i)}]_{\CB_G(g_i)},$$
and hence $\chi$ is an irreducible constituent of $\pi_i$ if and only if $\chi|_{\CB_G(g_i)}$ contains the principal character 
$1_{\CB_G(g_i)}$. 
\end{proof}

\begin{lem}\label{quotient}
Let $G$ be a finite group and let $N \lhd G$ be any normal subgroup. Suppose $\theta \in \Irr(G/N)$ is a constituent of $\pi_{G/N}$.
Then $\theta$, viewed as a character of $G$ by inflation, is also a constituent of $\pi_G$.
\end{lem}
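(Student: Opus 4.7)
The plan is to use Lemma \ref{red} as the main tool, in both directions. Since $\theta \in \Irr(G/N)$ is a constituent of $\pi_{G/N}$, Lemma \ref{red} applied to $G/N$ yields some $\bar g \in G/N$ with the property that the restriction $\theta|_{\CB_{G/N}(\bar g)}$ contains the principal character $1_{\CB_{G/N}(\bar g)}$. I would then pick any lift $g \in G$ of $\bar g$ and try to verify the hypothesis of Lemma \ref{red} for the inflated $\theta$ at the element $g$, namely that $\theta|_{\CB_G(g)}$ contains $1_{\CB_G(g)}$.

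The key observation is that the natural quotient map $\pi : G \twoheadrightarrow G/N$ carries $\CB_G(g)$ into $\CB_{G/N}(\bar g)$, since if $x$ commutes with $g$ in $G$ then $\bar x$ commutes with $\bar g$ in $G/N$. Set $\bar H := \pi(\CB_G(g)) \leq \CB_{G/N}(\bar g)$. Restricting further from $\CB_{G/N}(\bar g)$ down to $\bar H$ preserves the fact that the trivial character is a constituent: indeed, if we write $\theta|_{\CB_{G/N}(\bar g)} = 1_{\CB_{G/N}(\bar g)} + \psi$ with $\psi$ a proper character, then $\theta|_{\bar H} = 1_{\bar H} + \psi|_{\bar H}$, and $\psi|_{\bar H}$ remains a proper character, so $1_{\bar H}$ is a constituent of $\theta|_{\bar H}$.

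To finish, note that since $\theta$ is viewed as a character of $G$ by inflation, $\theta|_{\CB_G(g)}$ is precisely the inflation of $\theta|_{\bar H}$ along the surjection $\CB_G(g) \twoheadrightarrow \bar H$. Inflation of irreducible characters is irreducible, and sends the trivial character to the trivial character, so $1_{\CB_G(g)}$ is a constituent of $\theta|_{\CB_G(g)}$ with the same multiplicity as $1_{\bar H}$ in $\theta|_{\bar H}$. A second application of Lemma \ref{red}, now for $G$ and the element $g$, then concludes that the inflated $\theta$ is a constituent of $\pi_G$, as desired.

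The argument is entirely formal, and I do not expect any real obstacle: the lemma is a direct transfer via Frobenius reciprocity, and the only care needed is to track the behavior of the trivial character under restriction to a subgroup and under inflation along a quotient.
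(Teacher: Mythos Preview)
Your proof is correct and follows essentially the same route as the paper: both apply Lemma~\ref{red} to $G/N$ to find a suitable $\bar g$, use that the quotient map sends $\CB_G(g)$ into $\CB_{G/N}(\bar g)$, and then observe that the trivial constituent survives restriction and inflation. The only cosmetic difference is that the paper works with the full preimage $D$ of $\CB_{G/N}(\bar g)$ in $G$ (noting $\CB_G(g) \leq D$), whereas you work with the image $\bar H = \pi(\CB_G(g))$; these are equivalent ways of tracking the same inclusion.
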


\begin{proof}
By Lemma \ref{red}, there is some $g \in G$ such that $\theta|_{\bar C}$ contains $1_{\bar C}$, where 
$\bar C := \CB_{G/N}(\bar g)$ for $\bar g:=gN$. We can write $\bar C=D/N$ for some subgroup $D$ of $G$ that contains 
$N$. Inflating $\theta$ to a $G$-character and $1_{\bar C}$ to a $D$-character, we then have that $\theta|_D$ contains $1_D$.  
But $D$ contains $1_{\CB_G(g)}$, so $\theta|_{\CB_G(g)}$ contains $1_{\CB_G(g)}$. 
\end{proof}

\begin{cor}\label{psl2}
If $p \geq 2$ is a prime and $G = \SL_2(p)$, then $\pi_G$ contains every irreducible character of $G$ that is trivial on 
$\ZB(G)$.
\end{cor}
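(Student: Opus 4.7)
My plan is to derive the corollary essentially immediately from Theorem \ref{main-hstz} combined with Lemma \ref{quotient}, handling the small primes $p \in \{2, 3\}$ separately by hand.

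For $p \geq 5$, I would first note that the quotient $\overline{G} := G/\ZB(G) \cong \PSL_2(p)$ is a non-abelian finite simple group falling outside the exceptional class $\PSU_n(q)$ with $n \geq 3$ of Theorem \ref{main-hstz} (the ``$\PSU$-exception'' requires $n \geq 3$, whereas $\PSL_2(p) \cong \PSU_2(p)$ has $n = 2$). Applying that theorem, every irreducible character of $\overline{G}$ is a constituent of $\pi_{\overline{G}}$. Since any $\chi \in \Irr(G)$ that is trivial on $\ZB(G)$ is the inflation of a unique $\bar\chi \in \Irr(\overline{G})$, Lemma \ref{quotient} promotes the fact that $\bar\chi$ is a constituent of $\pi_{\overline{G}}$ to the statement that $\chi$ is a constituent of $\pi_G$.

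For $p = 2$ we have $G = \SL_2(2) \cong S_3$ with $\ZB(G) = 1$, so all three irreducible characters of $S_3$ must appear in $\pi_{S_3}$. I would verify this directly by decomposing $\pi_{S_3}$ via Lemma \ref{red} as the sum, over the three conjugacy classes of $S_3$, of the induced characters $\Ind^{S_3}_{\CB_{S_3}(g)} 1_{\CB_{S_3}(g)}$, and reading off the constituents from the character table. For $p = 3$ the quotient $\SL_2(3)/\ZB(\SL_2(3)) \cong A_4$, and by Lemma \ref{quotient} it is enough to check that every irreducible character of $A_4$ appears in $\pi_{A_4}$; this is again a short direct computation summing $\Ind^{A_4}_{\CB_{A_4}(g)} 1_{\CB_{A_4}(g)}$ over the four conjugacy classes of $A_4$.

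I do not expect any real obstacle: the deep input (Theorem \ref{main-hstz}) does all the heavy lifting, and Lemma \ref{quotient} is the only bridge needed. The only subtlety is that Theorem \ref{main-hstz} does not cover the small-prime cases, where $\overline{G}$ is either trivial (for $p = 2$) or the non-simple group $A_4$ (for $p = 3$), forcing the brief hand computations above.
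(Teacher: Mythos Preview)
Your proposal is correct and follows essentially the same route as the paper: invoke Theorem \ref{main-hstz} together with Lemma \ref{quotient} for $p \geq 5$, and handle $p=2,3$ by a short direct check in $S_3$ and $A_4$ via Lemma \ref{red}. (One harmless slip in your closing remark: for $p=2$ it is $\ZB(G)$ that is trivial, so $\overline{G}=G\cong S_3$, not the trivial group.)
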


\begin{proof}
By Theorem \ref{main-hstz}, $\pi_{G/Z}$ contains $\Irr(G/Z)$ if $p \geq 5$.
The same holds for $p=3$. (Indeed, $\PSL_2(3) \cong \alt_4$ has four irreducible characters, three of degree $1$ and one of degree 
$3$. The first three are trivial at the centralizer of any involution in $\PSL_2(3)$, and the one of degree $3$ restricts to the regular 
character of the centralizer of any element of order $3$. Hence the claim follows from Lemma \ref{red}.)
Now the statement follows from Lemma \ref{quotient} for $p\geq 3$.

For $p=2$, $\SL_2(2) \cong \mathsf{S}_3$ has $3$ irreducible characters. Two of them have degree $1$ and restrict to the 
principal character of the centralizer of a $3$-cycle. The remaining one, of degree $2$, restricts to the regular character of 
the centralizer of a $2$-cycle. Hence the statement follows from Lemma \ref{red}.
\end{proof}

\begin{lem}\label{f3}
Let $p \geq 3$ be any odd prime, $q=p^a$ with $a \in \Z_{\geq 1}$, and consider $V:=\F_q^3$ 
with a non-degenerate symmetric bilinear form $(\cdot,\cdot)$
and the corresponding special orthogonal group $\SO(V)$.
Let $U$ be a $2$-dimensional non-degenerate subspace of $V$ that contains a nonzero vector $u$ with $(u,u)=0$. If $0 \neq w \in V$, 
then the orbit of $w$ with respect to $\Omega(V)=[\SO(V),\SO(V)]$ intersects $U$. 
\end{lem}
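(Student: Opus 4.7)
First I would observe that $U$, being a $2$-dimensional non-degenerate subspace containing a nonzero isotropic vector, is a hyperbolic plane: it admits a basis $e, f$ with $(e,e)=(f,f)=0$ and $(e,f)=1$. Then $e + (c/2)f \in U$ has norm $c$ for every $c \in \F_q$ (using that $p$ is odd). Combined with Witt's extension theorem, which tells us that $\SO(V)$ acts transitively on nonzero vectors of each prescribed norm, this produces some $v_0 \in U$ in the $\SO(V)$-orbit of $w$.

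Next I would lift this to $\Omega(V)$. The spinor norm defines a surjection $\SO(V) \to \F_q^*/(\F_q^*)^2$ with kernel $\Omega(V)$, so $[\SO(V):\Omega(V)]=2$, and each $\SO(V)$-orbit on $V \setminus \{0\}$ is either already a single $\Omega(V)$-orbit or splits into exactly two $\Omega(V)$-orbits of equal size; the latter case occurs precisely when $\Stab_{\SO(V)}(v_0) \subseteq \Omega(V)$. In the first case we are done. In the second, it suffices to produce $h \in \SO(V) \setminus \Omega(V)$ with $h(U)=U$: then $hv_0 \in U$ and, by the stabilizer condition, $hv_0$ represents the $\Omega(V)$-orbit different from that of $v_0$, so both $\Omega(V)$-orbits inside the $\SO(V)$-orbit of $w$ meet $U$.

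To construct $h$, write $V = U \perp \langle e_0 \rangle$ with $(e_0, e_0) = d \in \F_q^*$. Because $U$ is hyperbolic, every element of $\F_q^*$ is realized as the norm of some non-isotropic vector in $U$, so I can pick $f_0 \in U$ with $d \cdot (f_0, f_0)$ a non-square in $\F_q^*$. Let $h$ be the composition of the reflections along $e_0$ and $f_0$. Then $h \in \SO(V)$ (a product of two reflections has determinant $1$), $h$ stabilizes $U$ (the reflection along $e_0$ fixes $U$ pointwise, and the reflection along $f_0$ preserves $U$), and the spinor norm of $h$ is $d \cdot (f_0, f_0) \notin (\F_q^*)^2$, so $h \notin \Omega(V)$ as required.

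The main obstacle is precisely this last construction: the existence of $h$ depends on the hyperbolic hypothesis on $U$, which supplies non-isotropic vectors of any prescribed nonzero norm and so allows me to hit either square class of the spinor norm while remaining inside the stabilizer of $U$. Once $h$ is in hand, the argument is just bookkeeping with Witt's theorem and the fact that $\Omega(V)$ has index $2$ in $\SO(V)$.
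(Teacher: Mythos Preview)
Your proof is correct. Both your argument and the paper's begin by using the hyperbolic structure of $U$ to realize every value of $\F_q$ as a norm, then invoke Witt's lemma to place a representative of the isometry-orbit of $w$ inside $U$, and finally address the index-$2$ gap between $\SO(V)$ and $\Omega(V)$.

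The difference lies in how that last step is handled. The paper splits according to whether $(w,w)$ vanishes: for $(w,w)\neq 0$ it shows $\GO(V)=\Omega(V)\cdot\Stab_{\GO(V)}(w)$ via the structure of $\GO^\pm_2(q)$, so the $\Omega(V)$-orbit already equals the full orbit; for $(w,w)=0$ it counts isotropic vectors and uses the explicit element $s=\diag(\delta,\delta^{-1})\in\SO(U)\smallsetminus\Omega(V)$ (in the hyperbolic basis of $U$) to switch $\Omega(V)$-orbits while staying in $U$. You avoid the case split by building, uniformly, an element $h\in\SO(V)\smallsetminus\Omega(V)$ stabilizing $U$ as a product of two reflections with controlled spinor norm. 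This is a little cleaner and makes the role of the hyperbolic hypothesis on $U$ (namely, that it supplies anisotropic vectors in both square classes) more transparent; the paper's version, in exchange, never needs to invoke the spinor norm explicitly. One minor point worth a word of justification: your claim that $\SO(V)$, not just $\GO(V)$, is transitive on nonzero vectors of a given norm is correct here because $\dim V$ is odd and $\GO(V)=\langle -I_3\rangle\times\SO(V)$, but Witt's theorem alone only gives transitivity for $\GO(V)$.
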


\begin{proof}
Since $U$ contains $u$ and is non-degenerate, we can find a basis $\{u,v\}$ of $U$ such that $(u,u)=(v,v)=0$ and 
$(u,v)=1$ (see e.g. the proof of \cite[Proposition 2.5.3]{KlL}). For any $\lambda \in \F_q^\times$, we have
$(u+\lambda v,u+\lambda v) = 2\lambda$. It follows that there is some $0 \neq w_0 \in U$ such that 
$(w_0,w_0)=(w,w)$. By Witt's lemma (see e.g. \cite[Proposition 2.1.6]{KlL}), there is some $g_0 \in \GO(V)$ such that 
$g_0(w) = w_0$. Note that $G:=\GO(V) = \langle -I_3 \rangle \times \SO(V)$, and $\Omega(V)$ has index $2$ in $\SO(V)$, 
see \cite[Table 2.1.C]{KlL}.

Assume in addition that $(w,w) \neq 0$. Then for $W:=w^\perp$ we have 
$\Stab_G(w) = \GO(W) \cong \GO^\pm_2(q)$, and $[\GO(W):\SO(W)]=[\SO(W):\Omega(W)]=2$, see 
\cite[Proposition 2.9.1(iii)]{KlL}. It follows that $G=\Omega(V)\Stab_G(w)$, and so we can write $g_0=gh$ with 
$g \in \Omega(V)$ and $h(w)=w$. Now we have $g(w)=gh(w)=g_0(w)=w_0 \in U$, as desired.

Now assume that $(w,w)=0$. The above argument shows that $G$ acts transitively on the set $\sO$ of isotropic vectors in $V$.  
It is straightforward to check that 
$$|\Stab_G(w)|=2|\Stab_{\SO(V)}(w)| = 2q.$$
Hence $|\sO| = q^2-1$, and $\SO(V)$ acts transitively on $\sO$. 
As $\Stab_{\SO(V)}(w)$ has odd order $q$, it is contained in 
$\Omega(V)$, and so $\sO$ splits into two $\Omega(V)$-orbits, each of length $(q^2-1)/2$. 
Since $w,w_0,u \in \sO$, we may assume that $g \in \SO(V)$ and $w_0=u$. If $g \in \Omega(V)$, we are done. 
If not, then we can find a non-square element $\delta \in \F_q^\times$, and note that the element 
$s:=\diag(\delta,\delta^{-1})$ of $\SO(U)$ is not contained in $\Omega(V)$ (since $\Omega(U) \cong C_{(q-1)/2}$ and 
$\SO(U) \cong C_{q-1}$). Now $sg(w) = s(u) = \delta u \in U$, and we are again done. 
\end{proof}

\begin{lem}\label{p-com1}
Let $E$ be a finite group, $F \leq \ZB(E)$, $x \in E$, and let $\lambda$ be an irreducible character of $F$. Suppose we can find $h \in E$ such that the commutator $[x,h]$ belongs to $F \smallsetminus \Ker(\lambda)$. Then $\varphi(x)=0$ for any irreducible character
$\varphi$ of $E$ that lies above $\lambda$. 
\end{lem}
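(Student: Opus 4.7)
The plan is to exploit Schur's lemma: since $F \leq \ZB(E)$, any $\varphi \in \Irr(E)$ acts on $F$ by scalars, and because $\varphi$ lies above $\lambda$, the scalar by which $z \in F$ acts is exactly $\lambda(z)$. In particular, $\varphi|_F = \varphi(1)\lambda$, and more importantly
$$\varphi(zg) = \lambda(z)\,\varphi(g) \quad \text{for all } z \in F, \ g \in E.$$

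First I would rewrite the commutator hypothesis in a usable form. Since $[x,h] \in F \leq \ZB(E)$, the standard identity $hxh^{-1} = [h,x]\cdot x$ becomes
$$hxh^{-1} = [x,h]^{-1}\, x,$$
with the central factor $[x,h]^{-1} \in F$ now collected on the left.

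Next I would apply $\varphi$ to both sides. The left-hand side equals $\varphi(x)$ because $\varphi$ is a class function. The right-hand side, by the scalar property on $F$, equals $\lambda([x,h])^{-1}\varphi(x)$. Comparing the two gives
$$\varphi(x) = \lambda([x,h])^{-1}\,\varphi(x),$$
i.e.\ $\bigl(1 - \lambda([x,h])^{-1}\bigr)\varphi(x) = 0$. Since $[x,h] \notin \Ker(\lambda)$, the scalar $\lambda([x,h])^{-1}$ differs from $1$, and therefore $\varphi(x)=0$, as required.

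There is no substantive obstacle here: the argument is a one-line application of Schur's lemma combined with the class-function property of $\varphi$. The only point requiring care is getting the sign right in the commutator identity $hxh^{-1} = [x,h]^{-1}x$, so that the non-triviality of $\lambda$ on $[x,h]$ indeed forces $\varphi(x)$ to vanish.
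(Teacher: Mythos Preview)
Your proof is correct and follows essentially the same approach as the paper's. The paper phrases the argument at the level of the representation $\Phi$ (writing $\Phi([x,h])=\alpha I_d$ with $\alpha\neq 1$, deducing $\Phi(x)=\alpha\,\Phi(hxh^{-1})$, and then taking traces), whereas you work directly with the character identity $\varphi(zg)=\lambda(z)\varphi(g)$; but the underlying idea---compare $\varphi(x)$ with $\varphi(hxh^{-1})$ using the central scalar coming from $[x,h]$---is identical.
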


\begin{proof}
Let $\varphi$ be afforded by a representation $\Phi:E \to \GL_d(\C)$. By hypothesis, $\Phi([x,h]) = \alpha I_d$ for some 
$1 \neq \alpha \in \C^\times$. Now we have $\alpha I_d = \Phi(x)\Phi(hx^{-1}h^{-1})$, and so
$$\Phi(x) = \alpha \Phi(hxh^{-1}).$$
Taking trace, we obtain $\varphi(x) = \alpha \varphi(hxh^{-1}) = \alpha \varphi(x)$, and so $\varphi(x)=0$.
\end{proof}

Now we fix any prime $p \geq 3$, any integer $n \geq 2$, and consider
$$G = G_n := \SL_2(\Z/p^n\Z).$$
The reduction modulo $p$ induces the exact sequence 
\begin{equation}\label{ses1}
  1 \to E \to G \to G_1= \SL_2(\Z/p\Z) \to 1,
\end{equation}  
whereas the reduction modulo $p^{n-1}$ induces the exact sequence
\begin{equation}\label{ses2}
  1 \to F \to G \to G_{n-1} = \SL_2(\Z/p^{n-1}\Z) \to 1.
\end{equation}  
We will represent elements in $G$ by $2 \times $2-matrices over $\Z$. In particular, any element in $E$ can be written as 
$I_2+pX$ for some $X \in M_2(\Z)$, whereas
\begin{equation}\label{for-f}
  F=\{ I_2+p^{n-1}Y \mid Y \in M_2(\Z),\Tr(Y) = 0\},
\end{equation}  
where the trace condition is equivalent to having determinant $1$.

For any $i,j \in \Z_{\geq 1}$ with $i+j \geq n$ and any $X,Y \in M_2(\Z)$ we have   
\begin{equation}\label{com10}
  (I_2+p^iX)(I_2+p^jY) = I_2+p^iX+p^jY = (I_2+p^jY)(I_2+p^iX)
\end{equation}
in $G$. It follows that 
$$F \leq \ZB(E),$$
and so the kernel of the conjugation action of $G$ on $F$ contains $E$ and $\bz=-I_2$. Thus $F$ can be viewed as the $\F_pS$-module
$$V = \{ Z \mid Z \in M_2(\F_p),\Tr(Z) = 0\}$$ 
of size $p^3$, where the ``linearization'' is given by the map 
\begin{equation}\label{com11}
  I_2 +p^{n-1}Y \mapsto Y\mathrm{(mod}~p)
\end{equation}  
using \eqref{for-f}, and $S:=G_1/\langle \bz \rangle \cong \PSL_2(p)$. 

Under the identification \eqref{com11}, the action of $G_1$ on $V$ is via $g: Z \mapsto gZg^{-1}$. (In fact, one can show that
$V$ is the adjoint module for $G_1$.)
Note that the element $\begin{pmatrix}0 & 1\\-1 & 0\end{pmatrix}$ of order $2$ of $S$ acts nontrivially on $V$. Now, if $p > 3$, then $S$ is simple, so $S$ must act faithfully on $V$. If $p=3$,
then any nontrivial normal subgroup of $S \cong \alt_4$ contains any element of order $2$ of $S$, whence $S$ also acts faithfully on $V$ 
as well. 

The action of $S$ on $V$ preserves the trace form
\begin{equation}\label{tr1}
  (Z,T) = \Tr(ZT)
\end{equation}  
which yields a non-degenerate symmetric bilinear form on $V$. As $S$ has order $p(p^2-1)/2 = |\Omega(V)|$ and $S$ acts 
faithfully on $V$, we have therefore shown that
$$\Omega(V) = S.$$

Note that the element $1-p$ has order $p^{n-1}$ in $(\Z/p^n\Z)^\times$. Hence we can find a generator $t(1-p)$ of 
$(\Z/p^n\Z)^\times$, where $t$ has order $p-1$. We now fix the elements
\begin{equation}\label{com12}
  A:= \begin{pmatrix}1-p & 0\\0 & 1+p+p^2 + \ldots + p^{n-1} \end{pmatrix},~
  g:= \begin{pmatrix}t(1-p) & 0\\0 & t^{p-2}(1+p+p^2 + \ldots + p^{n-1}) \end{pmatrix}
\end{equation}
of $G=G_n$, and note that $A \in E$ has order $p^{n-1}$ and $g$ has order $p^{n-1}(p-1)$. 

\begin{lem}\label{cent}
Suppose $p \geq 5$. For the element $g$ defined in \eqref{com12} of $G=G_n$, we have
$$C:=\CB_G(g) = \langle g \rangle.$$
\end{lem}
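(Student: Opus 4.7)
The plan is to compute $C := \CB_G(g)$ directly, exploiting the diagonal form of $g$. Write $g = \diag(a,b)$ with $a := t(1-p)$ and $b := t^{p-2}(1+p+\ldots+p^{n-1})$; since $\det(g) = 1$ in $\Z/p^n\Z$ we have $b = a^{-1}$. For any $h = \begin{pmatrix} x & y \\ z & w \end{pmatrix} \in G$, a direct computation shows that $gh = hg$ if and only if $(a-b)y \equiv 0$ and $(a-b)z \equiv 0 \pmod{p^n}$.

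The crux of the argument is therefore to check that $a - b$ is a unit in $\Z/p^n\Z$, equivalently that $a - b \not\equiv 0 \pmod{p}$. Since $t(1-p)$ generates $(\Z/p^n\Z)^\times$ and the kernel of reduction mod $p$ has order $p^{n-1}$ (coprime to $p-1$), the image $\bar t$ of $t$ in $(\Z/p\Z)^\times$ must have order exactly $p-1$. Modulo $p$ we have $a \equiv \bar t$ and $b \equiv \bar t^{-1}$, so $a - b \equiv \bar t - \bar t^{-1} \pmod{p}$. For $p \geq 5$ we have $p - 1 \geq 4$, so $\bar t^2 \neq 1$ in $(\Z/p\Z)^\times$, and hence $\bar t \neq \bar t^{-1}$. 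This is the one place where the hypothesis $p \geq 5$ is essential (for $p = 3$ one would have $\bar t = -1 = \bar t^{-1}$), and it is the main, albeit mild, obstacle in the proof.

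Given that $a - b$ is a unit, the commuting condition forces $y = z = 0$, so $C$ is contained in the diagonal torus $T := \{\diag(x,x^{-1}) \mid x \in (\Z/p^n\Z)^\times\}$. Conversely $T$ is abelian and contains $g$, hence $T \leq C$, and therefore $C = T$. Now $T \cong (\Z/p^n\Z)^\times$ is cyclic of order $p^{n-1}(p-1)$, which equals the order of $g$ as recorded just before the lemma. Since $g \in T$ and $T$ is cyclic of the same order as $\langle g \rangle$, we conclude $\langle g \rangle = T = C$, as required.
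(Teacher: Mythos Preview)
Your proof is correct and takes a genuinely different, more direct route than the paper's. The paper proceeds by induction on $n$: the base case $n=1$ is declared clear, and for the inductive step one uses the exact sequence $1 \to F \to G_n \to G_{n-1} \to 1$ to reduce (after adjusting by a power of $g$) to computing $\CB_F(g)$, which is done via the adjoint action of $\diag(t,t^{-1}) \in G_1$ on $V \cong F$; the hypothesis $p \geq 5$ enters as the condition that this action has a one-dimensional fixed space. You instead compute $\CB_G(g)$ in one step as the full diagonal torus $T$ by a direct matrix calculation, and then identify $T$ with $\langle g \rangle$ because $a = t(1-p)$ generates $(\Z/p^n\Z)^\times$. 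Both arguments ultimately rest on the same arithmetic fact, namely $\bar t^2 \neq 1$ in $\F_p^\times$ when $p \geq 5$, but yours is shorter and entirely self-contained; the paper's inductive approach, while longer here, has the virtue of reusing the $\PSL_2(p)$-module structure of $F$ that is developed for the main theorem anyway.
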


\begin{proof}
We will proceed by induction on $n$, for which the induction base $n=1$ is clear. For induction step $n \geq 2$, 
it suffices to show that any element $h \in \CB_G(g)$ is contained in $\langle g \rangle$. Using the 
sequence \eqref{ses2} and the induction hypothesis, we see that $\CB_{G/F}(g) = \langle gF \rangle$. 
As $\CB_{G/F}(g) = \langle gF\rangle$, we can adjust $h$ by some power of $g$ and assume that 
$$h \in \CB_F(g).$$
In the conjugation action of 
$G/E \cong \SL_2(p)$ (see \eqref{ses1}) on $F$, $g$ acts via $\diag(t,t^{-1})$, and so has a $1$-dimensional fixed point subspace,
generated by $\diag(1,-1)$. It follows that $|\CB_F(g)|=p$, whence $\CB_F(g) = \langle A^{p^{n-2}} \rangle$.
Thus $h \in \langle A^{p^{n-2}} \rangle$, and so $h \in \langle g \rangle$. 
\end{proof}

Also, let $F_0$ be the subgroup of $F$ that corresponds to the subspace
$$V_0:= \left\{ \begin{pmatrix} 0 & b\\c & 0 \end{pmatrix} \mid b,c \in \F_p \right\}$$
of $V$ under the identification \eqref{com11}.

\begin{lem}\label{p-com2}
Assume that $n \geq 3$, $0 \leq j \leq n-3$, and $i$ any integer coprime to $p$. Then
$$\left\{ [A^{ip^j},B] \mid B=I_2+p^{n-j-2}Y  \in E,Y \in M_2(\Z) \right\} = F_0.$$ 
\end{lem}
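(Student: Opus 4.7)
The plan is to compute the commutator $[A^{ip^j}, B]$ modulo $p^n$ explicitly, observe it always lies in $F_0$, and show every element of $F_0$ arises this way.

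First I would expand $A^{ip^j} = \diag(\lambda, \lambda^{-1})$ with $\lambda := (1-p)^{ip^j}$. Since $p$ is odd, lifting-the-exponent gives $\lambda = 1 + u p^{j+1}$ for some $u \in \Z$ with $u \equiv -i \pmod p$, and in particular
$$\lambda^{\pm 2} \equiv 1 \pm 2u p^{j+1} \pmod{p^{j+2}},$$
since any further correction has $p$-adic valuation $\geq 2j+2$. The key arithmetic identity $p^{n-j-2}\cdot p^{j+1} = p^{n-1}$ is what will produce the output at level $n-1$.

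Next, since $A^{ip^j}$ is diagonal, conjugation scales the off-diagonal entries: writing $Y = \begin{pmatrix} a & b \\ c & d \end{pmatrix}$,
$$A^{ip^j} B A^{-ip^j} = I_2 + p^{n-j-2}\begin{pmatrix} a & \lambda^2 b \\ \lambda^{-2} c & d \end{pmatrix}.$$
Subtracting $B$ and using the expansion of $\lambda^{\pm 2}$, all corrections at valuation $\geq 2j+2$ vanish after multiplication by $p^{n-j-2}$ (their valuation is then $\geq n+j \geq n$), yielding
$$A^{ip^j} B A^{-ip^j} \equiv B + 2u p^{n-1}\begin{pmatrix} 0 & b \\ -c & 0 \end{pmatrix} \pmod{p^n}.$$
The commutator equals $(A^{ip^j} B A^{-ip^j})\cdot B^{-1}$; writing $B^{-1} = I_2 + Z$ with $Z$ of valuation $\geq n-j-2 \geq 1$, the product of $p^{n-1}(\cdots)$ with $Z$ has valuation $\geq 2n-j-3 \geq n$ precisely because $j \leq n-3$. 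Therefore
$$[A^{ip^j}, B] \equiv I_2 + 2u p^{n-1}\begin{pmatrix} 0 & b \\ -c & 0 \end{pmatrix} \pmod{p^n}.$$

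Finally, under the identification \eqref{com11} this commutator corresponds to $2\bar u \begin{pmatrix} 0 & b \\ -c & 0 \end{pmatrix} \in V_0$, where $\bar u \equiv -i \not\equiv 0 \pmod p$, proving the inclusion $\subseteq F_0$. For the reverse inclusion, since $2\bar u \in \F_p^\times$, varying $(b, c) \in \Z^2$ covers every element of $V_0$; and for each choice of $(b, c)$ one produces a valid $B \in E$ by choosing $a, d \in \Z$ so that $\det B \equiv 1 \pmod{p^n}$, which reduces to the single congruence $a + d \equiv -p^{n-j-2}(ad - bc) \pmod{p^{j+2}}$, solvable by taking, say, $a = 0$ and solving for $d$. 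The main obstacle is the careful $p$-adic bookkeeping in the commutator expansion; the hypothesis $j \leq n-3$ is used sharply to annihilate the $p^{2n-j-3}$-correction coming from $B^{-1} - I_2$.
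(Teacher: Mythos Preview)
Your proof is correct and follows essentially the same route as the paper: an explicit modular computation of $[A^{ip^j},B]$ showing it equals $I_2+p^{n-1}M$ with $M\in V_0$, followed by a surjectivity check. The only noteworthy difference is in that last step: you solve the determinant congruence for $a,d$ directly, whereas the paper first observes (via $[A^{ip^j},B'][A^{ip^j},B]=[A^{ip^j},B'B]$, using $F_0\leq\ZB(E)$) that the commutator set is a subgroup, and then simply takes $Y$ strictly upper- or lower-triangular so that $\det B=1$ automatically---a slightly cleaner bookkeeping device, but not a different idea.
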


\begin{proof}
Set $X:= \diag(-i,i)$. An induction on $j \geq 0$ shows that 
$$A^{ip^j} = I_2 +p^{j+1}(X+pX_1),~~A^{-ip^j} = I_2 -p^{j+1}(X+pX_2)$$
for some $X_1,X_2 \in M_2(\Z)$ depending on $j$. For any $B = I_2+p^{n-j-2}Y$, modulo $p^n$ we have
$$\begin{aligned}A^{ip^j}B & \equiv A^{ip^j}+p^{n-j-2}Y+p^{n-1}XY,\\
A^{ip^j}BA^{-ip^j} & \equiv I_2+p^{n-j-2}Y-p^{n-1}YX+p^{n-1}XY\\
                           & =B+p^{n-1}(XY-YX),\\
A^{ip^j}BA^{-ip^j}B^{-1} & \equiv I_2 + p^{n-1}(XY-YX),
\end{aligned}$$  
(for the last congruence we also used $B^{-1} \equiv I_2 \pmod{p}$). Writing $Y = \begin{pmatrix}a & b\\c &d \end{pmatrix}$
with $a,b,c,d \in \Z$ we have $XY-YX = \begin{pmatrix}0 & -2b\\2c &0 \end{pmatrix}$, showing 
$[A^{ip^j},B] \in F_0$. Since $F_0 \leq \ZB(E)$, for any $B' \in E$ we have 
$$[A^{ip^j},B'][A^{ip^j},B] = A^{ip^j}B'A^{-ip^j} \cdot A^{ip^j}BA^{-ip^j}B^{-1} \cdot B'^{-1} = [A^{ip^j},B'B],$$
which shows that 
$$F':= \left\{ [A^{ip^j},B] \mid B=I_2+p^{n-j-2}Y  \in E,Y \in M_2(\Z) \right\}$$ 
is a subgroup of $F_0$. 
Taking $a=d=c=0$ and $0 \leq b \leq p-1$, respectively $a=d=b=0$ and $0 \leq c \leq p-1$, we see that $F'$ contains all the elements 
of the form $\begin{pmatrix}0 & -2b\\0 & 0 \end{pmatrix}$ and of the form  
$\begin{pmatrix}0 & 0\\2c & 0 \end{pmatrix}$. Hence the subgroup $F'$ coincides with $F_0$.
\end{proof}

\begin{proof}[Proof of Theorem \ref{main-sl2}.]
We proceed by induction on $n \geq 1$, where the induction base $n=1$ is noted in Corollary \ref{psl2}.

For the induction base $n \geq 2$, we consider the normal subgroup $F \lhd G=G_n$ in \eqref{ses2}, which is an 
elementary abelian subgroup of order $p^3$, and any irreducible character $\chi$ of $G$ which is trivial at the central involution $\bz$.
If $\Ker(\chi) \geq F$, then the induction hypothesis implies by Lemma \ref{quotient} that $\chi$ is a constituent of $\pi_G$. So 
we will assume that $\Ker(\chi) \not\geq F$. Hence $\chi|_F$ contains a nontrivial irreducible character $\lambda$ of $F$, and 
by the Clifford correspondence,
$$\chi = \Ind^G_J(\psi),$$
where $J = I_G(\lambda)$ is the inertia group of $\lambda$ (so $J \rhd F$), and $\psi \in \Irr(J)$ lies above $\lambda$. In fact
we have $J \geq E$ since $F \leq \ZB(E)$.

By Lemma \ref{red}, it suffices to show that $\chi|_C$ contains $1_C$ for the centralizer $C=\langle g \rangle$ of the element
$g$ in Lemma \ref{cent}. By Mackey's formula, $\chi|_C$ contains $\Ind^C_{J \cap C}(\psi|_{J \cap C})$. Hence it suffices to 
show that, for the irreducible character $\psi$ of $J$ that lies above $\lambda$, the restriction $\psi|_{J \cap C}$ contains 
$1_{J \cap C}$.

We will again use the map \eqref{com11} to identify $F$ with $V$. Fixing a primitive $p^{\mathrm {th}}$ root of unity $\eps \in \C^\times$,
we can find a unique element $0 \neq Z \in V$ such that 
\begin{equation}\label{f30}
  \lambda(T) = \eps^{\Tr(ZT)}
\end{equation}  
for any $T \in V$. We already noted that $G/\langle E,\bz \rangle$ acts on $V$ as $\Omega(V) \cong \Omega_3(p)$. Also, with respect to the trace form \eqref{tr1}, the plane $V_0$ is non-degenerate  and contains the isotropic vector $\begin{pmatrix}0 & 1\\0 & 0\end{pmatrix}$. Hence,
conjugating $\lambda$ suitably under $G$, by Lemma \ref{f3} we may assume that
\begin{equation}\label{f31}
  Z \in V_0.
\end{equation}    

As the trace form \eqref{tr1} is $G$-invariant, any element $x \in C$ fixes $\lambda$ if and only if it fixes $Z$. Note that any 
element in the diagonal subgroup $\langle \diag(t,t^{-1}) \rangle = \langle gE \rangle$ of $G_1$ that is neither $I_2$ nor $\bz=-I_2$ 
has only a one-dimensional fixed point subspace $\langle \diag(-1,1) \rangle$ in $V$. Hence any such element cannot fix $Z$ 
by the condition \eqref{f31}. In other words,
$$J \cap C = \langle A \rangle \times \langle \bz \rangle.$$
As $\chi$ is trivial at the central involution $\bz$, $\psi$ is also trivial at $\bz$. Therefore, it suffices to show that 
$\psi|_{\langle A \rangle}$ contains the principal character $1_{\langle A \rangle}$.

Let $\varphi$ be an irreducible constituent of $\psi|_E$. Since $\psi$ lies above the character $\lambda$ of $F \leq \ZB(E)$, $\varphi$ also
lies above $\lambda$, i.e. $\varphi|_F = \varphi(1)\lambda$.

First assume that $n=2$, so that $A$ has order $p$, $A \in F=E$, and $\varphi=\lambda$. Under the identification \eqref{com11}, $A$ corresponds to the element $\diag(-1,1)$ in $V$. The formula \eqref{f30} and the condition \eqref{f31} ensure that 
$\lambda(A) = 1 = \lambda(1)$, and so we are done.

Assume now that $n \geq 3$. The same argument as above shows that $A^{p^{n-2}}$ is contained in the kernel of $\lambda$, and so
also in the kernel of $\varphi$. On the other hand, consider any integer $0 \leq j \leq n-3$ and any integer $i$ coprime to $p$.
If $\Ker(\lambda)$ contains the entire $F_0$ (which corresponds to $V_0$ via \eqref{com11}), then \eqref{f30} shows
that $Z$ is a multiple of $\diag(-1,1)$, contrary to \eqref{f31}. Hence 
$$\Ker(\lambda) \not\geq F_0.$$
By Lemma \ref{p-com2}, there is some $B \in E$ such that 
$$[A^{ip^j},B] \in F \smallsetminus \Ker(\lambda).$$
It follows from Lemma \ref{p-com1} that $\varphi(A^{ip^j})=0$. 

We have shown that $\varphi$ vanishes at any element of the cyclic group $\langle A \rangle$ of order $p^{n-1}$ outside of the
subgroup of order $p$, on which $\varphi$ is trivial. Hence the multiplicity of $1_{\langle A\rangle}$ in $\varphi|_{\langle A\rangle}$ equals 
$$\frac{1}{p^{n-1}}\sum_{x \in \langle A \rangle}\varphi(x) = \frac{\varphi(1)}{p^{n-2}} > 0,$$
completing the proof.
\end{proof}

\begin{rmk}
\begin{enumerate}[\rm(i)]
\item It is possible that results of \cite{Ku} can also be used to obtain another proof of Theorem \ref{main-sl2}. An advantage of 
the current proof of Theorem \ref{main-sl2} is that it relies only on the structure of $\SL_2(\Z/p^n\Z)$ and does not require knowing its
character table.
\item Eamonn O'Brien has verified using {\sf Magma} that Theorem \ref{main-sl2} also hold for 
$\SL_2(\Z/2^n\Z)$ with $1 \leq n \leq 11$, and $\SL_2(\Z/3^n\Z)$ with $1 \leq n \leq 7$. In the case 
$p=2,3$, our proof of Theorem 
\ref{main-sl2} does show that many of the irreducible characters of 
$G/\ZB(G)$ occur in the conjugation character of $G = \SL_2(\Z/p^n\Z)$, but still leaves out some of these 
characters. It is not clear whether results of \cite{Ta} can be lead to a proof of Theorem \ref{main-sl2}
for $p=2,3$. 
\end{enumerate}
\end{rmk}


\begin{thebibliography}{ABCD}

\bibitem[H]{Hain}
  R. Hain, Hecke actions on loops and periods of iterated Shimura integrals, {\tt arXiv:2302.00143v1} 

\bibitem[HSTZ]{HSTZ}
  G. Heide, J. Saxl, P.H. Tiep and A.E. Zalesski, Conjugacy action, induced representations and the Steinberg square for simple groups of Lie type, {\it Proc. Lond. Math. Soc.} \textbf{106} (2013), 908--930.

\bibitem[HZ]{HZ} G. Heide and A.E. Zalesski,
 Passman's problem on adjoint representations, In:
``{\it Groups, Rings and Algebras, Proc. Conf. in Honour of D.S.
Passman}'',  {\it Contemp. Math.} {\bf 420} (2006), 163--176.

\bibitem[KlL]{KlL}
  P.B. Kleidman and M.W.  Liebeck, {\it The Subgroup Structure of the
Finite Classical Groups}, London Math. Soc. Lecture Note Ser. no.
{\bf 129}, Cambridge University Press, 1990.

\bibitem[Ku]{Ku}
  P. Kutzko, The characters of the binary modular congruence group, {\it Bull. Amer. Math. Soc.}
{\bf 79} (1973), 702--704.  

\bibitem[P]{P}
  D.S. Passman, The adjoint representation of group algebras and enveloping
algebras, {\it Publications Matematiques} {\bf 36} (1992), 861--878.

\bibitem[R]{R} R. Roth, On the conjugation representation of a finite group,
{\it Pacific J. Math.} {\bf 36} (1971), 515--521.

\bibitem[Sch]{Sch} T. Scharf, Ein weiterer Beweis, das die konjugierende
Darstellung der symmetrische Gruppen jede irreduzible Darstellung
enth\"alt, {\it Arch. Math.} {\bf 54} (1990), 427--429.

\bibitem[Ta]{Ta}
  S. Tanaka, Irreducible representations of the binary modular congruence groups mod $p^\lambda$,
{\it J. Math. Kyoto Univ.} {\bf 7} (1967), 123--132.  


\end{thebibliography}
\end{document}